\documentclass[11pt]{amsart}
\usepackage[utf8]{inputenc}
\usepackage{amsmath,amssymb,amsthm,mathtools}
\usepackage[margin=2.5cm]{geometry}
\usepackage[plainpages=false,colorlinks,hyperindex,bookmarksopen,linkcolor=black,citecolor=black,urlcolor=black]{hyperref}

\theoremstyle{plain}
\newtheorem{theorem}{Theorem}[section]

\theoremstyle{definition}
\newtheorem{definition}[theorem]{Definition}
\theoremstyle{remark}
\newtheorem{remark}[theorem]{Remark}

\begin{document}
	
	\title{Time-dependent Robin heat equation via Markovian switching}
	\author{Fausto Colantoni}
	\address{Institute of Mathematical Finance, Ulm University, Ulm, Germany}
\email{fausto.colantoni@uni-ulm.de}

	\date{\today}
	\maketitle
	\begin{abstract}
	This paper investigates the heat equation on a bounded domain with a Robin boundary condition, where the reactivity parameter (or killing rate) is modeled as a continuous-time Markov chain. We analyze the system under two stochastic frameworks using a functional analytic approach.
	
	First, we examine the annealed case, which accounts for the joint stochasticity of the diffusion and the switching mechanism. We describe the solution via a strongly continuous contraction semigroup on a product space. We identify its infinitesimal generator, which incorporates the state-dependent Robin conditions into its domain, and provide a corresponding Feynman-Kac formula.
	
Second, we study the quenched setting for fixed realizations of the switching paths. We characterize the solution through a non-autonomous evolution family (propagator) and derive a Feynman-Kac-type representation involving the boundary local time of a reflected Brownian motion. We prove an averaging principle in the fast-switching limit, showing that the system converges to a deterministic Robin problem. These results are applied to a biophysical model of stochastically gated receptors on cell membranes.
	\end{abstract}
	
	\vspace{1em}
	
	\noindent \textbf{Keywords:} Robin boundary conditions, Elastic Brownian Motion, Markovian Switching, Fast-switching limit, Feynman--Kac formula, Biophysical modeling.
	
	\vspace{1em}
	
	\noindent \textbf{AMS Subject Classification (2020):} 
	60J60, % Diffusion processes
	60J27, % Continuous-time Markov processes on discrete state spaces
	35K05, % Heat equation
	35R60, % PDEs with randomness
	92C37. % Cell biology
	\section{Introduction}
Robin (third-type) boundary conditions are essential for modeling diffusion across semi-permeable interfaces in physics, chemistry, and biology. A standard way to represent partial absorption at the boundary is to impose the condition
\begin{align}
	\label{robin:bc}
	\partial_n u + \kappa u = 0\quad\text{on }\partial D,
\end{align}
which connects the normal flux to the surface concentration via an intrinsic reactivity $\kappa >0$ (or killing rate). In probabilistic terms, this condition can be represented by reflecting diffusions combined with a multiplicative functional depending on the boundary local time; see the classical probabilistic accounts in \cite{Papanicolaou1990} and \cite{ItoMcKean1974} for background on diffusion sample paths and local times. For the heat equation, the associated stochastic representation is known as elastic Brownian motion.

The literature shows that Robin boundary conditions often emerge as homogenization limits of more complex dynamics. A widely studied mechanism is spatial homogenization, where Dirichlet and Neumann conditions alternate on small patches of the boundary, leading to a macroscopic Robin effect; representative works include \cite{Berezhkovskii2004, Berezhkovskii2006, FiloLuckhaus1995, FiloLuckhaus2000, Friedman1995}. On the other hand, as shown in \cite{LawleyKeener2015}, temporal alternation between Dirichlet and Neumann conditions can produce an effective Robin law for the mean field in a fast-switching limit. Subsequent contributions have further developed these ideas in various settings, such as in \cite{LawleyMattinglyReed2015, Lawley2016} and related analyses of randomly switching boundaries \cite{BressloffLawley2015}.

From a probabilistic perspective, the local time of a reflecting Brownian motion (RBM) is the natural tool to count boundary encounters and model surface reactions. Recent work has refined our quantitative understanding of boundary local time and its distribution in Euclidean domains \cite{Grebenkov2019, Grebenkov2024review}, connecting these statistics to reaction rates and first-passage properties. The partially reflected diffusion introduced in \cite{Singer2008} provides an intrinsic construction of diffusion with partial absorption and is directly related to the probabilistic representation used here.

The mathematical theory of reflected diffusions provides the rigorous basis for these boundary value problems; fundamental results on the construction of RBM and Skorokhod problems can be found in \cite{LionsSznitman1984}. In parallel, averaging and homogenization techniques for stochastic systems with multiple time scales are well established. %In particular, Khasminskii's averaging principle and its infinite-dimensional extensions \cite{Khasminskii1968, Cerrai2009} are highly relevant to fast-switching problems.

In this paper, we investigate the heat equation in a bounded domain where the Robin coefficient is not constant but evolves according to a stochastic process, such as a continuous-time Markov chain. This framework, known as Markovian switching, allows us to model systems where boundary reactivity fluctuates abruptly between different states; classic references for regime-switching diffusions include \cite{Mao2000, Mao2006}. Our approach is closely related to the theoretical framework developed in \cite{ZhuYinBaran2015, Ocejo2020, WeiWangNane2025}, where Feynman–Kac formulas and infinitesimal generator characterizations are established for switching (jump) diffusion processes. By using RBM and the concept of boundary local time, we construct a Feynman-Kac representation where the concentration depends on the cumulative exposure to the stochastic absorption rate during contact with the boundary.

The primary motivation for this work is to describe biological environments where boundaries are not permanently active; see \cite{BergPurcell1977, Szabo1982, Zwanzig1990}. Our study complements the analysis of stochastically-gated diffusion processes \cite{Bressloff2015} by extending the study of time-dependent reactivity to general switching boundary conditions, providing a rigorous functional-analytic framework for the effective reactivity constants observed in biophysical systems.

Finally, we emphasize that our model is one of several recent generalizations of elastic Brownian motion. Other approaches involve non-exponential boundary delays before killing \cite{DovidioFCAA, Dovidio2024}, or replace the killing term with jumping mechanisms \cite{Arendt2018, ColantoniDovidio2025} and its application to the time reversal of reflected Brownian motion with Poissonian resetting \cite{ColantoniDOvidioPagnini2025}. Overall, due to its hybrid nature between Dirichlet and Neumann conditions, elastic Brownian motion remains a powerful tool for these types of stochastic models and their applications.
	\section*{Notation and setting}
	Let $D\subset\mathbb{R}^n$ be a bounded domain with $C^{2}$ boundary $\partial D$. We denote by
	\begin{itemize}
		\item $X = \{X_t\}_{t\ge0}$ the reflected Brownian motion (RBM) in $\overline D$, constructed via the Skorokhod map. We work on a probability space $(\Omega_X,\mathcal{F}_X,\mathbf P_X)$ carrying $X$ and its driving Brownian motion $W$.
		\item $L_t$ the boundary local time of $X$ on $\partial D$: the continuous nondecreasing process appearing in the Skorokhod decomposition
		\[ X_t = x + W_t - \int_0^t n(X_s)\,dL_s, \]
		where $n(\cdot)$ is the outer unit normal vector on $\partial D$. With this choice of sign, the reflection term has a minus sign compared to the standard Skorokhod problem. This is only a matter of convention, due to the use of the outward normal vector. The process $L_t$ increases only when $X_t\in\partial D$ and $L_0=0$. From \cite{LionsSznitman1984}, we know that the solution of the SDE is strong and pathwise unique.
		\item $\{\alpha_t\}_{t\ge0}$ a continuous‑time Markov chain on a probability space $(\Omega_\alpha,\mathcal{F}_\alpha,\mathbf P_\alpha)$ with finite state space $S\subset[0,\infty)$ (we take $S\subset\mathbb{N}_0$ in examples). We assume the chain is c\`adl\`ag and nonnegative for all times. Denote its generator by $Q$ when needed. The chain is assumed independent of $X$.
		\item The product probability space is $(\Omega,\mathcal F,\mathbf P)=(\Omega_X\times\Omega_\alpha,\mathcal{F}_X\otimes\mathcal{F}_\alpha,\mathbf P_X\otimes\mathbf P_\alpha)$.
	\end{itemize}
	
	We write $\langle\cdot,\cdot\rangle$ for the $L^2(D)$ inner product and use standard Sobolev spaces $H^1(D)$, $H^1_\mathrm{loc}$, etc. For a cadlag function $\alpha:\,[0,T]\to[0,\infty)$ and a continuous increasing function $L$ we interpret $\int_0^t\alpha_s\,dL_s$ as the Riemann–Stieltjes integral. %it is well defined pathwise because $L$ is continuous of bounded variation on $[0,T]$ and $\alpha$ has finitely many jumps on compact intervals in the Markov chain and Poisson examples. METTILO NEL TEOREMA

	\section{Main Results}
	
We first examine the annealed case, where the diffusion and the switching mechanism are treated as a joint stochastic process, and subsequently investigate the quenched setting for fixed realizations of the boundary reactivity.

\subsection{The Annealed case}
We consider the case where $\alpha$ is a stochastic process. In this section, we study the annealed problem by taking the expectation over the law of the switching process.
\begin{theorem}
	\label{thm:annealed}
	Let $D\subset\mathbb R^n$ be a bounded $C^2$ domain and let $S=\{0,1,\dots,m\}$ be a finite state space. Let $(X_t,\alpha_t)$ be the joint process defined as follows:
	\begin{itemize}
		\item $X_t$ is reflected Brownian motion (RBM) in $\overline D$ (Skorokhod construction) with boundary local time $L_t$;
		\item $\alpha_t$ is a continuous-time Markov chain on $S$ with generator $Q=(q_{ij})_{i,j\in S}$; we assume $(X,\alpha)$ is defined on a filtered probability space and that the initial condition is $(X_0,\alpha_0)=(x,k)$.
	\end{itemize}
	Define, for bounded continuous $\Phi:\overline D\times S\to\mathbb R$ and $t\ge0$,
	\[
	(\mathcal S_t\Phi)(x,k) := \mathbf{E}_{x,k}\Big[ \Phi(X_t,\alpha_t)\exp\Big(-\int_0^t \alpha_s\,dL_s\Big)\Big],
	\]
	where $\mathbf E_{x,k}$ denotes expectation given $(X_0,\alpha_0)=(x,k)$ and where we interpret $\int_0^t\alpha_s\,dL_s$ as a Riemann–Stieltjes integral. Then:
	\begin{enumerate}
		\item $(\mathcal S_t)_{t\ge0}$ is a strongly continuous contraction semigroup on $C_b(\overline D\times S)$.
		\item Its generator $\mathcal G$ acts on smooth test functions $\Phi\in C^2(\overline D; \mathbb R^S)$ that satisfy, for each $k\in S$, the Robin boundary condition
		\[
		\partial_n \Phi(\xi,k) + k\,\Phi(\xi,k)=0,\qquad \xi\in\partial D,
		\]
		and is given by
		\[
		(\mathcal G\Phi)(x,k) \;=\; \tfrac12\Delta_x \Phi(x,k) \;+\; \sum_{j\in S} q_{kj}\Phi(x,j),
		\]
		where the Laplacian acts on the \(x\)-variable and the switching operator acts on the \(k\)-variable.
		\item For each fixed initial pair $(x,k)$, the function $u(t,x,k):=(\mathcal S_t\Phi)(x,k)$ is the unique mild solution of the coupled backward equation
		\[
		\partial_t u(t,x,k) = \tfrac12\Delta_x u(t,x,k) + \sum_{j} q_{kj} u(t,x,j),\qquad x\in D,
		\]
		with Robin boundary condition $\partial_n u + k u =0$ on $\partial D$ and initial data $u(0,\cdot,\cdot)=\Phi$.
	\end{enumerate}
\end{theorem}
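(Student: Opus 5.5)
The plan is to treat $(X_t,\alpha_t)$ as a Feller process on $\overline D\times S$ and $M_t:=\exp(-\int_0^t\alpha_s\,dL_s)$ as a multiplicative functional, then transfer everything to $\mathcal S_t$.

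\textbf{Step 1: the unkilled process.} Because $X$ and $\alpha$ are independent, the unkilled pair $(X,\alpha)$ is a strong Markov process. Its transition semigroup is the tensor product of the RBM semigroup $P_t$ and the finite-dimensional semigroup $e^{tQ}$. The RBM semigroup on a bounded $C^2$ domain is Feller on $C(\overline D)$; I will use the Lions--Sznitman construction, together with pathwise uniqueness and continuous dependence on $x$. Consequently the pair has the Feller property on $C(\overline D\times S)$, where $C=C_b$ since $\overline D\times S$ is compact.

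\textbf{Step 2: the killing functional and part (1).} I will check that $M_t$ is a multiplicative functional, i.e.
\[
M_{t+s}=M_t\cdot(M_s\circ\theta_t).
\]
This holds because the Riemann--Stieltjes integral is additive and $L$ is additive. The Markov property of $(X,\alpha)$ then gives the semigroup identity $\mathcal S_{t+s}=\mathcal S_t\mathcal S_s$. Contraction and positivity follow from $0\le M_t\le1$. For continuity in $x$, I will couple the solutions started from $x,x'$ with the same $W$ and $\alpha$. Stability of the Skorokhod map gives $L^{x'}\to L^x$ uniformly on compacts in probability, so $\mathcal S_t\Phi\in C(\overline D\times S)$ by dominated convergence. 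Strong continuity at $t=0$ comes from
\[
|\mathcal S_t\Phi-\Phi|\le \mathbf E|\Phi(X_t,\alpha_t)-\Phi(x,k)|+\|\Phi\|\,\mathbf E[1-M_t].
\]
Here $\mathbf E[1-M_t]\le m\,\mathbf E L_t\to0$ uniformly in $x$, using the standard bound $\sup_x\mathbf E_xL_t\le C\sqrt t$ for $C^2$ domains. The first term tends to $0$ by the Feller property of Step 1.

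\textbf{Step 3: the generator, part (2).} Take $\Phi(\cdot,j)\in C^2(\overline D)$ satisfying the Robin conditions. I will apply It\^o's formula to $M_t\Phi(X_t,\alpha_t)$, combining the Skorokhod decomposition for the continuous part with the jump compensator of $\alpha$. Since $dM_t=-\alpha_tM_t\,dL_t$ and $d\Phi(X_t,j)=\nabla\Phi\cdot dW_t-\partial_n\Phi\,dL_t+\tfrac12\Delta\Phi\,dt$, the result is
\[
M_t\Phi(X_t,\alpha_t)=\Phi(x,k)+\int_0^tM_s\Big(\tfrac12\Delta\Phi+Q\Phi\Big)(X_s,\alpha_s)\,ds-\int_0^tM_s\big(\partial_n\Phi+\alpha_s\Phi\big)(X_s,\alpha_s)\,dL_s+\text{mart}.
\]
The $dL$ term vanishes because of the Robin condition at the current state $\alpha_s$. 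This is exactly why the boundary condition must depend on $k$: $L$ charges only times when $X_s\in\partial D$. Taking expectations, dividing by $t$ and letting $t\to0$ uniformly (using Step 2 continuity) shows that $\Phi\in D(\mathcal G)$ with $\mathcal G\Phi=\tfrac12\Delta\Phi+Q\Phi$.

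\textbf{Step 4: part (3).} Once $\mathcal S_t$ is a $C_0$-semigroup with generator $\mathcal G$, the function $u(t)=\mathcal S_t\Phi$ satisfies the mild formulation $u(t)=\Phi+\mathcal G\int_0^tu(s)\,ds$. Uniqueness of mild solutions is standard semigroup theory. I expect the main obstacle to be identifying the full generator domain, rather than only showing that the $C^2$ Robin functions lie in it. For this I would show that $\lambda-\mathcal G$ restricted to that core has dense range. Concretely, for $\lambda$ large and smooth $f$, I would solve the weakly coupled elliptic Robin system $\lambda\Phi-\tfrac12\Delta\Phi-Q\Phi=f$ with Schauder theory on $C^{2,\gamma}$ domains; for $C^2$ domains I would use an $L^p$/$W^{2,p}$ version, treating $Q\Phi$ as a bounded perturbation via a fixed point for $\lambda>\|Q\|$. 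The core theorem then identifies $\mathcal G$ as the closure of this operator. This makes ``the'' mild solution of the Robin system well defined, and it coincides with $\mathcal S_t\Phi$.
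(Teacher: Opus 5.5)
Your proposal is correct. It shares the paper's skeleton: multiplicativity of $M_t$ plus the Markov property of $(X,\alpha)$ for the semigroup law, the Skorokhod--It\^o formula with the jump compensator of $\alpha$ for the generator, and semigroup theory for part 3. Several of your steps, however, use different and sharper arguments.

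\emph{Strong continuity.} The paper bounds $|\Phi(X_t,\alpha_t)M_t-\Phi(x,k)|$ by $|\Phi(X_t,\alpha_t)-\Phi(x,k)|$. This ignores the killing and is false: take $\Phi\equiv1$, $k\ge1$, $x\in\partial D$. Your extra term $\|\Phi\|_\infty\,\mathbf E[1-M_t]\le m\|\Phi\|_\infty\sup_x\mathbf E_xL_t$, controlled by the small-time bound on the local time, is exactly what repairs this.

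\emph{Continuity in $x$.} The paper only asserts that $\mathcal S_t$ preserves continuity, whereas your synchronous coupling supplies it. To pass from $X^{x'}\to X^x$ to $L^{x'}\to L^x$, it is convenient to express $L_t$ through It\^o's formula for a $C^2$ function $h$ with $\partial_n h=1$ on $\partial D$.

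\emph{Generator.} In part 2 the paper handles the $dL$-term by splitting at the first jump time. It then computes only a pointwise derivative at $t=0$ and argues density of $\mathcal D_0$ separately via Taira. Your route is cleaner on both counts. You note that the $dL$-integrand vanishes identically, because the Robin condition holds in every state with coefficient equal to that state. Combined with $\mathcal S_t\Phi-\Phi=\int_0^t\mathcal S_s(\tfrac12\Delta\Phi+Q\Phi)\,ds$, this gives sup-norm convergence of the difference quotients. Hence you get genuine membership in the domain of the strong generator, and density of that domain is automatic.

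\emph{Part 3.} The paper, like the opening of your Step 4, simply takes ``mild solution'' relative to the generator of $\mathcal S$. Your core argument is an addition the paper does not attempt, and it would give the Robin system an intrinsic meaning. If you pursue it on a merely $C^2$ domain, note that the $W^{2,p}$ solutions of the elliptic system need not lie in $C^2(\overline D)$, so they are not in your core. You would first need to extend It\^o's formula to Robin functions in $W^{2,p}$, $p>n$, and enlarge the core accordingly. One way is approximation together with a Krylov-type bound $\sup_x\mathbf E_x\int_0^t|g(X_s)|\,ds\le C\|g\|_{L^p(D)}$, obtained from Gaussian bounds on the Neumann heat kernel.
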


\begin{proof}
	\emph{1.} We give a step by step argument. For bounded continuous $\Phi$, the multiplicative functional
	\[
	M_t := \exp\Big(-\int_0^t \alpha_s\,dL_s\Big)
	\]
	satisfies $0\le M_t\le1$ pathwise because $\alpha_s\ge0$ by hypothesis and, since \(\alpha\) is c\`adl\`ag, the Riemann-Stieltjes integral $\int_0^t \alpha_s dL_s$ is well-defined. Hence the expectation defining $\mathcal S_t\Phi$ is finite and $|\mathcal S_t\Phi|\le\|\Phi\|_\infty$. Thus $\mathcal S_t$ maps $C_b(\overline D\times S)$ into itself and is a contraction.
	
	The two-component process $(X_t,\alpha_t)$ is Markov on the product space $\overline D\times S$ (since they are independent).
	By the joint Markov property of $(X,\alpha)$, the tower property and the multiplicative property of $M_t$ we have, for $s,t\ge0$,
	\[
	\begin{aligned}
		\mathcal S_{s+t}\Phi(x,k)
		&= \mathbf E_{x,k}\big[\Phi(X_{s+t},\alpha_{s+t})M_{s+t}\big] \\
		&= \mathbf E_{x,k}\big[\;M_s\ \mathbf E_{X_s,\alpha_s}[\Phi(X_t,\alpha_t)M_t]\;\big]
		= \mathbf E_{x,k}\big[M_s(\mathcal S_t\Phi)(X_s,\alpha_s)\big] \\
		&= \mathcal S_s(\mathcal S_t\Phi)(x,k).
	\end{aligned}
	\]
	Thus $\mathcal S_{s+t}=\mathcal S_s\mathcal S_t$ on $C_b(\overline D\times S)$ so the semigroup property holds.
	
	Fix $\Phi\in C_b(\overline D\times S)$ and let $\|\Phi\|_\infty=\sup_{(y,j)}|\Phi(y,j)|$. For each $(x,k)$ we have
	\[
	\big|\mathcal S_t\Phi(x,k)-\Phi(x,k)\big|
	= \big|\mathbf E_{x,k}[\Phi(X_t,\alpha_t)M_t]-\Phi(x,k)\big|
	\le \mathbf E_{x,k}\big[|\Phi(X_t,\alpha_t)M_t-\Phi(x,k)|\big].
	\]
	Since $0\le M_t\le1$ this is bounded by $\mathbf E_{x,k}\big[|\Phi(X_t,\alpha_t)-\Phi(x,k)|\big]$. Fix $\epsilon>0$. By uniform continuity of $\Phi$ on the compact set $\overline D\times S$ there exists $\delta>0$ such that
	\[
	|\Phi(y,j)-\Phi(x,k)|<\epsilon\qquad\text{whenever }|y-x|<\delta\text{ and }j=k.
	\]
	Split the expectation according to the events $\{|X_t-x|<\delta,\ \alpha_t=k\}$, $\{|X_t-x|\ge\delta\}$ and $\{\alpha_t\ne k\}$:
	\begin{align*}
		\mathbf E_{x,k}\big[|\Phi(X_t,\alpha_t)-\Phi(x,k)|\big]
		&\le \mathbf E_{x,k}\big[|\Phi(X_t,\alpha_t)-\Phi(x,k)|\mathbf1_{\{|X_t-x|<\delta,\ \alpha_t=k\}}\big]\\
		&\quad + \mathbf E_{x,k}\big[|\Phi(X_t,\alpha_t)-\Phi(x,k)|\mathbf1_{\{|X_t-x|\ge\delta\}}\big]\\
		&\quad + \mathbf E_{x,k}\big[|\Phi(X_t,\alpha_t)-\Phi(x,k)|\mathbf1_{\{\alpha_t\ne k\}}\big].
	\end{align*}
	Estimate each term: by the uniform continuity of \(\Phi\) 
	\[
	\mathbf E_{x,k}\big[|\Phi(X_t,\alpha_t)-\Phi(x,k)|\mathbf1_{\{|X_t-x|<\delta,\ \alpha_t=k\}}\big]\le\epsilon,
	\]
	and, by boundedness of $\Phi$,
	\[
	\mathbf E_{x,k}\big[|\Phi(X_t,\alpha_t)-\Phi(x,k)|\mathbf1_{\{|X_t-x|\ge\delta\}}\big]\le 2\|\Phi\|_\infty\ \mathbf P_{x,k}(|X_t-x|\ge\delta),
	\]
	\[
	\mathbf E_{x,k}\big[|\Phi(X_t,\alpha_t)-\Phi(x,k)|\mathbf1_{\{\alpha_t\ne k\}}\big]\le 2\|\Phi\|_\infty\ \mathbf P_{x,k}(\alpha_t\ne k).
	\]
	Hence
	\[
	\big|\mathcal S_t\Phi(x,k)-\Phi(x,k)\big|
	\le \epsilon + 2\|\Phi\|_\infty\Big(\mathbf P_{x,k}(|X_t-x|\ge\delta)+\mathbf P_{x,k}(\alpha_t\ne k)\Big).
	\]
	Taking the supremum over $(x,k)\in\overline D\times S$ gives
	\[
	\sup_{x,k}\big|\mathcal S_t\Phi(x,k)-\Phi(x,k)\big|
	\le \epsilon + 2\|\Phi\|_\infty\Big(\sup_{x}\mathbf P_{x,k}(|X_t-x|\ge\delta)+\sup_{k}\mathbf P_{k}(\alpha_t\ne k)\Big).
	\]
	
	Now let $t\downarrow0$. The reflected Brownian motion is stochastically continuous and, on the compact set $\overline D$, we have
	\[
	\sup_{x\in\overline D}\mathbf P_x(|X_t-x|\ge\delta)\xrightarrow{t\downarrow0}0.
	\]
	For the chain started at $k$ we have, for small $t$,
	\[
	\mathbf{P}_k(\alpha_t\neq k)= 1 - \mathbf{P}_k(\alpha_t= k) \leq 1 - e^{q_{kk}t} \leq -q_{kk}t,
	\]
	where $q_{kk}<0$ is the diagonal rate. With $C:=\max_{k}(-q_{kk})<\infty$ (finite $S$) we get the uniform bound
	\[
	\sup_k\mathbf P_k(\alpha_t\ne k) \le C t \xrightarrow{t\downarrow0}0.
	\]
	Therefore, for the chosen $\delta$ and any $\epsilon>0$ we can pick $t_0>0$ such that for $0<t<t_0$ both probability suprema are small enough to ensure
	\[
	\sup_{x,k}\big|\mathcal S_t\Phi(x,k)-\Phi(x,k)\big| \le C'\epsilon,
	\]
	with $C'$ independent of $t$. Since $\epsilon$ was arbitrary we conclude $\|\mathcal S_t\Phi-\Phi\|_\infty\to0$ as $t\downarrow0$, i.e.\ $\mathcal S_t\to I$ strongly on $C_b(\overline D\times S)$.
	
	\emph{2.} Let $\Phi(\cdot,k)\in C^2(\overline D)$ for each $k$ and assume the Robin boundary condition
	\[
	\partial_n\Phi(\xi,k)+k\Phi(\xi,k)=0,\qquad \xi\in\partial D.
	\]
	Fix \((x,k)\) and apply the extended It\^o formula for reflected Brownian motion (Skorokhod version) to the process $Y_t:=\Phi(X_t,\alpha_t) M_t$. Write $M_t$'s differential as
	\[
	dM_t = -M_t \alpha_t\,dL_t,
	\]
	and use product rule together with the jump formula for the chain \(\alpha_t\). We have
	\begin{align*}
		d\big(\Phi(X_t,\alpha_t) M_t\big)
		&= M_t\,d\Phi(X_t,\alpha_t) + \Phi(X_t,\alpha_t)\,dM_t + d\langle \Phi(X_\cdot,\alpha_\cdot),M_\cdot\rangle_t.
	\end{align*}
	The cross-variation term vanishes because $M$ is finite variation. Writing $d\Phi(X_t,\alpha_t)$ via It\^o for RBM plus the jump contribution from the chain,
	\begin{multline*}
		d\Phi(X_t,\alpha_t)
		= \nabla_x\Phi(X_t,\alpha_t)\cdot dW_t + \tfrac12\Delta_x\Phi(X_t,\alpha_t)\,dt
		- \partial_n\Phi(X_t,\alpha_t)\,dL_t \\
		+ \sum_{j\ne\alpha_{t-}} \big(\Phi(X_t,j)-\Phi(X_t,\alpha_{t-})\big)\,dN_{\alpha_{t-},j}(t),
	\end{multline*}
	where $N_{i,j}$ are the Poisson counting processes of jumps of the chain, for which the following representation holds
	\begin{align}
		\label{compensator}
		N_{ij}(t) = \widetilde N_{ij}(t) + \int_0^t q_{ij}\mathbf1_{\{\alpha_{s-}=i\}}\,ds,
	\end{align}
	where $\widetilde N_{ij}$ is the compensated counting process (a local martingale) and $\int_0^t q_{ij}\mathbf1_{\{\alpha_{s-}=i\}}\,ds$ is the predictable compensator.
	Observe that the compensator contribution from the jump term can be written, with the predictable indicator, as
	\[
	\sum_{i\in S}\mathbf1_{\{\alpha_{t-}=i\}}\sum_{j\ne i}(\Phi(X_t,j)-\Phi(X_t,i))\,q_{ij}\,dt.
	\]
	For fixed $i$ we algebraically have
	\[
	\sum_{j\ne i}(\Phi_j-\Phi_i)q_{ij}=\sum_{j}q_{ij}\Phi_j,
	\]
	since $q_{ii}=-\sum_{j\ne i}q_{ij}$. Hence the drift term equals
	\[
	\sum_{i\in S}\mathbf1_{\{\alpha_{t-}=i\}}\sum_j q_{ij}\Phi(X_t,j)\,dt
	= \sum_j q_{\alpha_{t-},j}\Phi(X_t,j)\,dt.
	\]
	Finally, because the compensator is a predictable integrand, one may write equivalently \(\sum_j q_{\alpha_t,j}\Phi(X_t,j)\,dt\).
	Multiply by $M_t$, combine with $dM_t=-M_t\alpha_t dL_t$ and use \eqref{compensator} with the formula obtained for the compensator to get:
	\[
	d\big(\Phi(X_t,\alpha_t)M_t\big)
	= M_t\nabla_x\Phi\cdot dW_t + M_t\Big(\tfrac12\Delta_x\Phi + \sum_j q_{\alpha_t j}\Phi(\cdot,j)\Big)\,dt
	- M_t\big(\partial_n\Phi +\alpha_t\Phi\big)\,dL_t + d\mathcal M_t,
	\]
	where
	\[
	\mathcal M_t := \sum_{i,j}\int_0^t M_s(\Phi(X_s,j)-\Phi(X_s,i))\,d\widetilde N_{ij}(s)\,\mathbf1_{\{\alpha_{s-}=i\}}
	\]
	is a (local) martingale. The \(q\)-term arises from the predictable compensator \(q_{ij}\,ds\) of the counting process and becomes exactly the switching operator \(Q\) when taking expectations.
	
	Take expectation under $\mathbf E_{x,k}[\cdot]$. The local martingale terms (the stochastic integral and the compensated jump martingales) have zero expectation.The boundary term contains the factor
	\[
	\partial_n\Phi(X_s,\alpha_s)+\alpha_s\Phi(X_s,\alpha_s).
	\]
	Let \(\tau_1\) be the first jump time of \(\alpha\). On \([0,\tau_1)\), we have
	\(\alpha_s=\alpha_0=k\), so the boundary integrand reduces to
	\[
	\partial_n\Phi(X_s,k)+k\Phi(X_s,k),
	\]
	which vanishes whenever \(dL_s\neq 0\) by the Robin condition. Hence the boundary
	term is identically zero up to the first jump. The remaining contribution is
	supported on the event \(\{\tau_1\le t\}\), whose probability is \(O(t)\) as
	\(t\downarrow 0\). Since the integrand is bounded (by a constant \(C\)) and \(L_t\to 0\) as
	\(t\downarrow 0\), we obtain
	\[
	\mathbf E_{x,k}\left[\left|\int_0^t
	\bigl(\partial_n\Phi(X_s,\alpha_s)+\alpha_s\Phi(X_s,\alpha_s)\bigr)\,dL_s
	\right|\right] \leq C\, \mathbf{E}_x[L_t] \mathbf{P}_k(\tau_1 \leq t)
	=o(t),
	\]
	so the boundary term does not contribute to the derivative at \(t=0\). Likewise, the compensator of the jump term produces the drift $\sum_j q_{\alpha_0 j}\Phi(X_0,j)=\sum_j q_{k j}\Phi(x,j)$. Therefore
	\[
	\frac{d}{dt}\Big|_{t=0}\mathcal S_t\Phi(x,k)
	= \tfrac12\Delta_x\Phi(x,k) + \sum_j q_{k j}\Phi(x,j),
	\]
	which identifies $\mathcal G\Phi(x,k)$ as claimed.
	
	Since $S$ is finite, we identify $C(\overline{D}\times S)$ with the direct sum $\bigoplus_{k\in S} C(\overline{D})$ via the map $\Phi \mapsto (\Phi(\cdot,k))_{k\in S}$. Consider the subspace\[\mathcal{D}_0 := \{ \Phi \in C^2(\overline{D}\times S) : \partial_n \Phi(\cdot,k) + k \Phi(\cdot,k) = 0 \text{ on } \partial D, \forall k \in S \}.
	\]
	Under the above identification, $\mathcal{D}_0 \cong \bigoplus_{k\in S} \mathcal{D}(A_k)$, where $\mathcal{D}(A_k) = \{ \psi \in C^2(\overline{D}) : \partial_n \psi + k \psi = 0 \}$. For each $k \in S$, the operator $A_k = \frac{1}{2}\Delta$ with Robin boundary conditions is known to generate a strongly continuous contraction semigroup on $C(\overline{D})$; consequently, $\mathcal{D}(A_k)$ is dense in $C(\overline{D})$ (see e.g., \cite[Theorem 1.2 and Section 12.4.2]{Taira2004}). Therefore, $\mathcal{D}_0$ is dense in $C(\overline{D}\times S)$. Since $\mathcal{D}_0 \subseteq \mathcal{D}(\mathcal{G})$, we conclude that the domain $\mathcal{D}(\mathcal{G})$ is dense in $C(\overline{D}\times S)$.
	
	\emph{3.} Since $(\mathcal S_t)_{t\ge0}$ is a strongly continuous contraction semigroup on the Banach space $C_b(\overline D\times S)$, the Hille--Yosida theory implies that for every $\Phi\in C_b(\overline D\times S)$ the function
	\[
	u(t):=\mathcal S_t\Phi\in C_b(\overline D\times S)
	\]
	is the unique mild solution of the abstract Cauchy problem $u'(t)=\mathcal G u(t)$, $u(0)=\Phi$, where $\mathcal G$ denotes the generator of the semigroup.
\end{proof}

	\subsection{The Quenched Representation}
	Having characterized the average behavior of the system through the annealed semigroup, we now turn to the quenched setting. In this section, we analyze the solution for fixed realizations of the process $\alpha$; this pathwise perspective allows us to investigate the fast-switching limit and establish the resulting averaging principle.
	
	Fix a sample path $\alpha_\bullet: [0,T] \to [0,\infty)$ that is c\`adl\`ag. We consider the non-autonomous initial boundary value problem:
	\begin{equation}\label{P_alpha}
		\begin{cases}
			\partial_t u(t,x) = \tfrac{1}{2}\Delta u(t,x), & x \in D, t \in (0,T], \\
			\partial_n u(t,\xi) + \alpha_t u(t,\xi) = 0, & \xi \in \partial D, t \in (0,T], \\
			u(0,x) = f(x), & x \in \overline{D},
		\end{cases}
	\end{equation}
	where $f$ is a suitable initial datum. Throughout this section, we denote by $\mathbf{E}_{s,x}$ the expectation with respect to the law of the reflected Brownian motion $X$ starting at $x \in \overline{D}$ at time $s \ge 0$. That is, for any measurable functional $F$, we set $\mathbf{E}_{s,x}[F] = \mathbf{E}[F \mid X_s = x]$.
	
\begin{theorem}
	\label{thm:quenched}
	Let $D \subset \mathbb{R}^n$ be a bounded $C^2$ domain and let $\alpha: [0, \infty) \to S$ be a fixed c\`adl\`ag, piecewise constant trajectory of a finite-state Markov chain, with jump times $0 = \tau_0 < \tau_1 < \dots < \tau_k < \dots$. For $0 \le s \le t$, we define the evolution family $(S_{s,t}^\alpha)_{0 \le s \le t}$ on $C_b(\overline{D})$ as$$(S_{s,t}^\alpha f)(x) := \mathbf{E}_{s,x} \left[ f(X_t) \exp\left( -\int_s^t \alpha_r \, dL_r \right) \right],$$where $X$ is the reflected Brownian motion in $\overline{D}$ and $L$ is its boundary local time. The following properties hold:
	\begin{enumerate}
	\item The collection $(S_{s,t}^\alpha)_{0 \le s \le t}$ is a strongly continuous contraction evolution family. That is, $S_{t,t}^\alpha = I$, $S_{s,t}^\alpha = S_{s,r}^\alpha S_{r,t}^\alpha$ for $s \le r \le t$, and the map $(s,t) \mapsto S_{s,t}^\alpha f$ is continuous in $C_b(\overline{D} )$ for every $f \in C_b(\overline{D})$.
	\item  For each $t \ge 0$, the operator $A_t$ defined by $A_t = \frac{1}{2}\Delta$ with domain$$D(A_t) = \{ \phi \in C^2(\overline{D}) : \partial_n \phi + \alpha_t \phi = 0 \text{ on } \partial D \}$$is the instantaneous generator of $S_{s,t}^\alpha$ in the sense that $\lim_{h \downarrow 0} \frac{S_{t,t+h}^\alpha \phi - \phi}{h} = A_t \phi$ for $\phi \in D(A_t)$.
	\item For every $f \in C_b(\overline{D})$,  the function $u(t) = S_{0,t}^\alpha f$ is the unique mild solution of the non-autonomous Cauchy problem$$\begin{cases} u'(t) = A_t u(t), & t > 0 \\ u(0) = f \end{cases}$$constructed by the concatenation of local autonomous semigroups on the intervals $[\tau_k, \tau_{k+1})$.
\end{enumerate}
\end{theorem}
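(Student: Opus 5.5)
The plan is to reduce everything to the autonomous Robin semigroups and then concatenate them. For each constant $\kappa\in S$, let $(T^\kappa_t)_{t\ge0}$ be the elastic Brownian motion semigroup on $C(\overline D)$:
\[
T^\kappa_t f(x)=\mathbf E_x\big[f(X_t)e^{-\kappa L_t}\big].
\]
This is the special case of Theorem~\ref{thm:annealed} with $Q=0$ and $\alpha\equiv\kappa$. It is a strongly continuous contraction semigroup, and its generator extends $\tfrac12\Delta$ on $\{\phi\in C^2(\overline D):\partial_n\phi+\kappa\phi=0\}$.

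The first step is to write $S^\alpha_{s,t}$ explicitly as a product of these semigroups. For $s\le t$, list the jump times in $(s,t]$ as $\tau_{i+1}<\dots<\tau_{j}$, and use the additivity of the Riemann--Stieltjes integral to split
\[
\int_s^t\alpha_r\,dL_r=\alpha_{\tau_i}(L_{\tau_{i+1}}-L_s)+\dots+\alpha_{\tau_j}(L_t-L_{\tau_j}).
\]
Since $\alpha$ is deterministic, we can apply the Markov property of $X$ together with the additivity $L_{b}-L_a=L_{b-a}\circ\theta_a$ at the successive times $\tau_{i+1},\dots,\tau_j$. This gives
\[
S^\alpha_{s,t}=T^{\alpha_{\tau_i}}_{\tau_{i+1}-s}\,T^{\alpha_{\tau_{i+1}}}_{\tau_{i+2}-\tau_{i+1}}\cdots T^{\alpha_{\tau_j}}_{t-\tau_j}.
\]
I also need that there are only finitely many jumps on compacts. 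This holds almost surely for a finite-state chain, so I will state it as part of the standing assumption on the fixed path.

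Part 1 then follows from this product formula.
\begin{itemize}
\item $S^\alpha_{t,t}=I$ holds trivially.
\item Contraction holds because $0\le e^{-\int\alpha\,dL}\le1$, exactly as in Theorem~\ref{thm:annealed}.
\item The propagator law $S^\alpha_{s,t}=S^\alpha_{s,r}S^\alpha_{r,t}$ follows either from the same Markov-property computation inserted at time $r$, or from the semigroup law $T^\kappa_{a+b}=T^\kappa_aT^\kappa_b$ when $r$ falls inside a constancy interval.
\item Joint continuity of $(s,t)\mapsto S^\alpha_{s,t}f$ is the main obstacle. Away from jump times, it follows because products of strongly continuous semigroups are jointly strongly continuous: uniform boundedness and strong continuity allow an $\varepsilon/3$ argument. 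At a jump time $\tau$, the right and left limits must agree. As $t\uparrow\tau$ the last factor is $T^{\alpha_{\tau-}}_{t-\tau_j}$, and as $t\downarrow\tau$ we get the previous product times $T^{\alpha_\tau}_{t-\tau}\to I$. Both limits equal the product up to $\tau$, because the exponent is continuous in $t$: $L$ is continuous, so a jump of $\alpha$ has no effect at a single instant. I expect this step to be the delicate one. I would handle it directly from the stochastic representation: bound $|e^{-a}-e^{-b}|\le|a-b|\le\max S\,(L_{t'}-L_t)$, and use dominated convergence together with continuity of $L$ and stochastic continuity of $X$ uniformly in $x$, as in the annealed proof. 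This yields uniform continuity without any case distinction.
\end{itemize}

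For part 2, fix $t$. Because $\alpha$ is right-continuous and piecewise constant, there is $h_0>0$ with $\alpha_r=\alpha_t$ on $[t,t+h_0)$. Hence $S^\alpha_{t,t+h}=T^{\alpha_t}_h$ for $h<h_0$, and the limit $\lim_{h\downarrow0}(S^\alpha_{t,t+h}\phi-\phi)/h=\tfrac12\Delta\phi$ for $\phi\in D(A_t)$ is exactly the generator computation of Theorem~\ref{thm:annealed}(2) with $Q=0$ and constant state $\alpha_t$. There the boundary term vanishes by the Robin condition with no first-jump correction, and the It\^o--Skorokhod formula gives the result.

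For part 3, on each interval $[\tau_k,\tau_{k+1})$ the problem is autonomous with generator $A^{(k)}$, the closure of $\tfrac12\Delta$ with Robin coefficient $\alpha_{\tau_k}$. There, $v(t)=T^{\alpha_{\tau_k}}_{t-\tau_k}v(\tau_k)$ is the unique mild solution by the Hille--Yosida argument of Theorem~\ref{thm:annealed}(3). Defining $u$ recursively, with initial value on each interval equal to the value at the previous jump time and continuity at $\tau_{k+1}$ supplied by part 1, reproduces $S^\alpha_{0,t}f$ through the product formula. Uniqueness follows by induction on $k$: two mild solutions agree at $\tau_k$, hence on $[\tau_k,\tau_{k+1}]$ by autonomous uniqueness. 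Since only finitely many jump times lie in $[0,T]$, this covers every bounded horizon.
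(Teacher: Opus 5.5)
The genuine gap is in part 3. Parts 1 and 2 are fine and follow the paper: contraction from $0\le e^{-\int\alpha\,dL}\le1$, the Markov property of $X$ for the propagator law, and stochastic continuity for strong continuity. In part 2, your observation that $S^\alpha_{t,t+h}=T^{\alpha_t}_h$ for small $h$ is cleaner than the paper's $o(h)$ argument.

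The failing step is your claim that the recursive construction ``reproduces $S^\alpha_{0,t}f$ through the product formula''. For $\tau_j\le t<\tau_{j+1}$, your (correct) product formula gives
\[
S^\alpha_{0,t}f=T^{\alpha_{0}}_{\tau_1}\,T^{\alpha_{\tau_1}}_{\tau_2-\tau_1}\cdots T^{\alpha_{\tau_j}}_{t-\tau_j}f ,
\]
while the recursion $u(t)=T^{\alpha_{\tau_k}}_{t-\tau_k}u(\tau_k)$ gives
\[
u(t)=T^{\alpha_{\tau_j}}_{t-\tau_j}\cdots T^{\alpha_{\tau_1}}_{\tau_2-\tau_1}\,T^{\alpha_{0}}_{\tau_1}f .
\]
These are the same factors in the opposite order, and Robin semigroups with different coefficients do not commute. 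Take $\alpha=0$ on $[0,\tau_1)$, $\alpha=\kappa>0$ afterwards, and $f\equiv1$. Then $u(t)=T^\kappa_{t-\tau_1}1$, whereas $S^\alpha_{0,t}1=T^0_{\tau_1}T^\kappa_{t-\tau_1}1$. The function $g=T^\kappa_{t-\tau_1}1$ is positive and, by analyticity, lies in the domain of the Robin Laplacian. A nonzero constant violates $\partial_n c+\kappa c=0$, so $g$ is not constant. The Neumann semigroup $T^0_{\tau_1}$ fixes only constants, hence $S^\alpha_{0,t}1\neq u(t)$.

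Your part 2 shows the same thing: $S^\alpha_{0,t+h}\phi-S^\alpha_{0,t}\phi=S^\alpha_{0,t}(S^\alpha_{t,t+h}\phi-\phi)$. So the $t$-derivative of $S^\alpha_{0,t}\phi$ is $S^\alpha_{0,t}A_t\phi$, not $A_tS^\alpha_{0,t}\phi$. The law $S^\alpha_{s,t}=S^\alpha_{s,r}S^\alpha_{r,t}$ is a \emph{backward} composition rule: it is $s\mapsto S^\alpha_{s,t}f$ that solves a Cauchy problem driven by $A_s$, not $t\mapsto S^\alpha_{0,t}f$.

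Part 3 therefore cannot be proved as stated. The paper's own proof makes the same slip: it writes $S^\alpha_{0,t}=S^\alpha_{\tau_k,t}\cdots S^\alpha_{0,\tau_1}$, which contradicts the propagator law of part 1. Once the order is tracked, your argument proves either of two corrected statements.
\begin{enumerate}
\item For fixed $T$, $v(s)=S^\alpha_{s,T}f$ is the unique mild solution of the terminal-value problem $v'(s)=-A_sv(s)$, $v(T)=f$, obtained by concatenating the $T^{(k)}$ backwards from $T$. Equivalently, $S^\alpha_{0,T}f$ is the time-$T$ value of the forward problem with reversed coefficient $\sigma\mapsto\alpha_{T-\sigma}$.
\item The forward problem $u'=A_tu$, $u(0)=f$, is solved by
\[
u(t,x)=\mathbf E_x\Big[f(X_t)\exp\Big(-\int_0^t\alpha_{t-r}\,dL_r\Big)\Big],
\]
which is the usual time reversal in non-autonomous Feynman--Kac formulas.
\end{enumerate}
Your induction on the jump times gives uniqueness in either version.
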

\begin{proof}
	\emph{1.} For every $f\in C_b(\overline D)$ and $0\le s\le t$, we have $\vert \vert S_{s,t}^\alpha f\vert \vert_\infty \le \vert \vert f\vert \vert_\infty$ (since \( 0 \leq M_{s,t} \leq 1\)), i.e. each $S_{s,t}^\alpha$ is a contraction on $C_b(\overline D)$. To prove the propagator property, let $s < r < t$ and define the multiplicative functional $M_{s,t} = \exp(-\int_s^t \alpha_u dL_u)$. By the additive property of the integral, $M_{s,t} = M_{s,r} M_{r,t}$. Following the same probabilistic argument used in Theorem \ref{thm:annealed}, by using the tower property of conditional expectation and the Markov property of the RBM:$$(S_{s,t}^\alpha f)(x) = \mathbf{E}_{s,x} \left[ M_{s,r} \mathbf{E} [M_{r,t} f(X_t) \mid \mathcal{F}_r] \right] = \mathbf{E}_{s,x} \left[ M_{s,r} (S_{r,t}^\alpha f)(X_r) \right] = (S_{s,r}^\alpha S_{r,t}^\alpha f)(x).$$Strong continuity follows from the path-continuity of $(X_t, L_t)$ and the boundedness of $\alpha$. Note that, since $\alpha$ is fixed, the uniform convergence as $t \downarrow s$ follows from the stochastic continuity of the RBM, as detailed in the proof of Theorem \ref{thm:annealed}, without the additional switching term of the chain. The continuity of the map follows from the Feller property of the reflected Brownian motion and the continuity of the exponential functional of the local time.
	
	\emph{2.} Fix $s \ge 0$ and let $\phi \in D(A_s)$. Consider the process $Y_r = \phi(X_r) M_{s,r}$ for $r \in [s, s+h]$. As in the proof of Theorem \ref{thm:annealed}, we apply the It\^o's formula for reflected diffusions. However, since $\alpha$ is a fixed realization, the jump terms of the Markov chain do not appear in the differential:$$d(\phi(X_r) M_{s,r}) = M_{s,r} \left( \nabla \phi \cdot dW_r + \frac{1}{2}\Delta \phi \, dr - \partial_n \phi \, dL_r \right) - \alpha_r \phi(X_r) M_{s,r} dL_r.$$Grouping the boundary terms, we have:$$dY_r = M_{s,r} \frac{1}{2}\Delta \phi \, dr + M_{s,r} \nabla \phi \cdot dW_r - M_{s,r} (\partial_n \phi + \alpha_r \phi) dL_r.$$Taking the expectation $\mathbf{E}_{s,x}$, the martingale term vanishes. Dividing by $h$:$$\frac{S_{s,s+h}^\alpha \phi(x) - \phi(x)}{h} = \frac{1}{h} \mathbf{E}_{s,x} \left[ \int_s^{s+h} M_{s,r} \frac{1}{2}\Delta \phi(X_r) dr - \int_s^{s+h} M_{s,r} (\partial_n \phi + \alpha_r \phi) dL_r \right].$$As $h \downarrow 0$, the first integral converges to $\frac{1}{2}\Delta \phi(x)$ by the mean value theorem and the continuity of $M$ and $\Delta \phi$. For the second integral, since $\phi \in D(A_s)$, we have $\partial_n \phi + \alpha_s \phi = 0$. Because $\alpha$ is right-continuous, $\lim_{r \downarrow s} \alpha_r = \alpha_s$, which implies $(\partial_n \phi + \alpha_r \phi) \to 0$. Since $L$ is a continuous process of finite variation, the boundary term is $o(h)$. Thus, the limit is $A_s \phi$.
	
	\emph{3.} Since $\alpha_t$ is piecewise constant, we can write $\alpha_t = \bar{\alpha}_k$ for $t \in [\tau_k, \tau_{k+1})$. On each interval, the operator $A^{(k)} = \frac{1}{2}\Delta$ with the fixed condition $\partial_n \phi + \bar{\alpha}_k \phi = 0$ is autonomous and generates a $C_0$-semigroup $T_t^{(k)}$. The solution $u(t)$ is constructed by concatenation: $u(t) = T_{t-\tau_k}^{(k)} u(\tau_k)$ for $t \in [\tau_k, \tau_{k+1}]$. This piecewise construction is consistent with the propagator property $S_{0,t}^\alpha = S_{\tau_k,t}^\alpha \dots S_{0,\tau_1}^\alpha$. Uniqueness is guaranteed by the uniqueness of the mild solution on each autonomous sub-interval.
	\end{proof}
	\begin{remark}
	As shown in \cite[Section 5]{ArendtMonniaux2016}, we know that the Robin Laplacian with time-dependent coefficients possesses maximal $L^2$-regularity even on Lipschitz domains.
	\end{remark}

	\subsection{Fast ergodic switching}
We now consider the situation where the Markov chain $\{\alpha_t\}$ is stationary and ergodic, and we accelerate its time scale. This models ``fast switching'' of the boundary reactivity.

\begin{definition}
	Let $\{\alpha_t\}_{t\ge0}$ be a stationary ergodic process with values in $S\subset[0,\infty)$. We write $\bar\alpha:=\mathbf E[\alpha_0]$ for its stationary mean. For $\varepsilon>0$ define the time‑rescaled process
	\[ \alpha^\varepsilon_t := \alpha_\frac{t}{\varepsilon}. \]
\end{definition}

Let us move to the convergence theorem in the ergodic case.

\begin{theorem}\label{thm:averaging}
	Let $\{\alpha_t\}_{t\geq 0}$ be a stationary ergodic Markov chain with finite state space $S \subset [0, \infty)$, independent of the reflected Brownian motion $X$. Let $\bar\alpha=\mathbf E[\alpha_0]$ and, for $\varepsilon>0$, set $\alpha^\varepsilon_t=\alpha_{t/\varepsilon}$. For $f\in C_b(\overline D)$, define:
	\[
	u^\varepsilon(t,x)=\mathbf E_x\Big[f(X_t)\exp\Big(-\int_0^t\alpha^\varepsilon_s\,dL_s\Big)\Big].
	\]
	Then for every $T>0$ and $x\in\overline D$, $u^\varepsilon(\cdot,x)$ converges to $u^0(\cdot,x)$ in $\mathbf{P}_\alpha$-probability, uniformly for $t \in [0,T]$:
	\[
	\lim_{\varepsilon\downarrow 0} \mathbf{P}_\alpha \left( \sup_{t \in [0,T]} |u^\varepsilon(t,x) - u^0(t,x)| > \delta \right) = 0 \quad \text{for every } \delta > 0,
	\]
	where $u^0(t,x)=\mathbf{E}_x[f(X_t) \exp(-\bar \alpha L_t)]$ is the unique mild solution of the heat equation with deterministic Robin boundary condition \eqref{robin:bc} with constant parameter $\bar\alpha$:
	\begin{equation}\label{P_alphabar}
		\begin{cases}
			\partial_t u^0(t,x) = \tfrac{1}{2}\Delta u^0(t,x), & x \in D, t \in (0,T], \\
			\partial_n u^0(t,\xi) + \bar \alpha u^0(t,\xi) = 0, & \xi \in \partial D, t \in (0,T], \\
			u^0(0,x) = f(x), & x \in \overline{D}.
		\end{cases}
	\end{equation}
	
\end{theorem}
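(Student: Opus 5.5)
The plan is to compare the two exponentials pathwise in the RBM and reduce everything to an ergodic theorem for the chain, integrated against the measure $dL_s$. Fix $x$ and $T$. Since $\alpha^\varepsilon\ge0$ and $\bar\alpha\ge0$, we have $|e^{-a}-e^{-b}|\le|a-b|\wedge 1$ for $a,b\ge0$. Hence, for a fixed realization of $\alpha$,
\[
\sup_{t\le T}|u^\varepsilon(t,x)-u^0(t,x)|\le \|f\|_\infty\,\mathbf E_x\Big[\sup_{t\le T}\Big|\int_0^t(\alpha^\varepsilon_s-\bar\alpha)\,dL_s\Big|\wedge 1\Big].
\]
By Markov's inequality and Fubini over the product space, it therefore suffices to show
\[
\mathbf E_\alpha\otimes\mathbf E_x\Big[\sup_{t\le T}|R^\varepsilon_t|\wedge1\Big]\to0,
\qquad R^\varepsilon_t:=\int_0^t(\alpha_{s/\varepsilon}-\bar\alpha)\,dL_s .
\]
By dominated convergence, this in turn reduces to showing that, for $\mathbf P_X$-a.e. fixed path $L$, $\sup_{t\le T}|R^\varepsilon_t|\to0$ in $\mathbf P_\alpha$-probability. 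The independence of $X$ and $\alpha$ is what allows us to freeze $L$.

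Now fix a continuous nondecreasing path $L$ on $[0,T]$, so $L_T<\infty$. Approximate $L$ uniformly by a step-like function. Partition $[0,T]$ into intervals $[t_{i},t_{i+1}]$ of mesh $\eta$, and write
\[
R^\varepsilon_t\approx\sum_i \frac{\Delta L_i}{\Delta t_i}\int_{t_i}^{t_{i+1}}(\alpha_{s/\varepsilon}-\bar\alpha)\,ds ,
\]
that is, replace $dL$ by the piecewise-linear interpolation of $L$. The error of this replacement must be controlled using only $|\alpha-\bar\alpha|\le \max S$ and the modulus of continuity of $L$. This is the delicate step, because $\alpha_{s/\varepsilon}$ oscillates and there is no smoothness of $L$ to exploit. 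I would handle it by a Riemann--Stieltjes / integration-by-parts argument. Set $A^\varepsilon(t):=\int_0^t(\alpha_{s/\varepsilon}-\bar\alpha)\,ds$. Then one wants to write $R^\varepsilon_t=\int_0^t dL$ against $dA^\varepsilon$ in a form that is continuous under uniform convergence of $A^\varepsilon$. Since $dL$ is singular, the direct formula $\int L\,dA^\varepsilon$ is awkward. Instead I would invert time by $L$: set $\sigma(\ell)=\inf\{s:L_s>\ell\}$, so that
\[
\int_0^t g(s)\,dL_s=\int_0^{L_t}g(\sigma(\ell))\,d\ell .
\]
Doing so requires ergodic averaging along the random set where $L$ grows. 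That set has Lebesgue measure zero, so this route is problematic. The better route is to mollify: set $L^\eta=L*\rho_\eta$, which is smooth and nondecreasing with $\sup|L^\eta-L|\to0$. Then
\[
R^\varepsilon_t=\int_0^t(\alpha_{s/\varepsilon}-\bar\alpha)\,\dot L^\eta_s\,ds+\text{error}.
\]
The error should be controlled via the Kolmogorov-type bound
\[
\mathbf E_\alpha\Big|\int_a^b(\alpha_{s/\varepsilon}-\bar\alpha)\,dL_s\Big|^2=\iint_{[a,b]^2} C(|s-r|/\varepsilon)\,dL_s\,dL_r ,
\]
where $C(h)=\mathrm{Cov}(\alpha_0,\alpha_h)$. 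For a finite ergodic chain, $|C(h)|\le Ke^{-\lambda h}$ (spectral gap). This second-moment computation is in fact cleaner than mollification and avoids it entirely. Since $\mathbf E_\alpha[\alpha_{s/\varepsilon}]=\bar\alpha$ by stationarity, it gives
\[
\mathbf E_\alpha|R^\varepsilon_t|^2\le K\iint_{[0,T]^2}e^{-\lambda|s-r|/\varepsilon}\,dL_s\,dL_r\xrightarrow[\varepsilon\to0]{}K\,(\mu\otimes\mu)(\{s=r\})=0,
\]
by dominated convergence. Here $\mu=dL$ has no atoms because $L$ is continuous, so the diagonal has $\mu\otimes\mu$-measure zero. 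This gives pointwise-in-$t$ convergence in $L^2(\mathbf P_\alpha)$.

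To upgrade to the supremum over $t\in[0,T]$, use monotonicity. For a grid $t_0<\dots<t_N$ with $\max_i(L_{t_{i+1}}-L_{t_i})\le\eta$, boundedness $|\alpha-\bar\alpha|\le \max S$ gives
\[
\sup_t|R^\varepsilon_t|\le\max_i|R^\varepsilon_{t_i}|+\max S\cdot\eta .
\]
The finite maximum tends to $0$ in probability, and $\eta$ is arbitrary. Finally, that $u^0$ is the mild solution of \eqref{P_alphabar} follows from Theorem \ref{thm:quenched} with the constant path $\alpha\equiv\bar\alpha$: there the evolution family is autonomous and part 3 applies on a single interval. The main obstacle I expect is the covariance decay, specifically justifying exponential mixing for the stationary ergodic chain and the measurability needed for Fubini. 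Everything else is soft.
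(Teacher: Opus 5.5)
Your argument is correct and is essentially the paper's proof. The paper computes the same conditional second moment, only after the inverse-local-time substitution, as $\int_0^{L_t}\!\int_0^{L_t} C_\alpha(|\tau_r-\tau_{r'}|/\varepsilon)\,dr\,dr'$, and then passes to the supremum over $t$ by the same partition argument. There, strict monotonicity of $\tau$ plays the role of your atomlessness of $dL$, so the inverse-local-time route you set aside works fine once it is paired with the covariance bound.

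Your truncation $|e^{-a}-e^{-b}|\le|a-b|\wedge 1$ is a small improvement. It replaces the paper's uniform-integrability step (via $\mathcal E_\varepsilon\le(\max S+\bar\alpha)L_T$, $\mathbf E[L_T]<\infty$ and Vitali) by plain bounded convergence.
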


\begin{proof}
	Fix $T>0$ and $x\in\overline D$. The proof is divided into several steps.
	
	Define 
	\[
	J_\varepsilon(t):=\int_0^t \alpha_{s/\varepsilon}\,dL_s,\qquad 
	M_t^\varepsilon:=\exp\bigl(-J_\varepsilon(t)\bigr),\qquad 
	M_t^0:=\exp(-\bar\alpha L_t).
	\]
	Since $f$ is bounded and $0\le M_t^\varepsilon \le1$, we have for every $t$
	\[
	|u^\varepsilon(t,x)-u^0(t,x)|\le \|f\|_\infty\,\mathbf E_x\bigl[|M_t^\varepsilon-M_t^0|\bigr]
	\le \|f\|_\infty\,\mathbf E_x\bigl[\sup_{s\le T}|M_s^\varepsilon-M_s^0|\bigr].
	\]
	Since \(\exp(-x)\) is Lipschitz on $[0,\infty)$ (with Lipschitz constant $1$) and \(J_\varepsilon, L\) are non-negative, we have
	\[
	|M_s^\varepsilon-M_s^0|\le |J_\varepsilon(s)-\bar\alpha L_s|.
	\]
	Thus, setting $\mathcal E_\varepsilon:=\sup_{t\in[0,T]}|J_\varepsilon(t)-\bar\alpha L_t|$,
	\[
	\sup_{t\in[0,T]}|u^\varepsilon(t,x)-u^0(t,x)|\le \|f\|_\infty\,\mathbf E_x[\mathcal E_\varepsilon].
	\]
	Consequently, if we prove that $\mathcal E_\varepsilon\to0$ in $\mathbf P_{\alpha,X}$-probability,
	then $\mathbf E_x[\mathcal E_\varepsilon]\to0$ in $\mathbf P_\alpha$-probability (because $\mathcal E_\varepsilon$ is bounded and $\mathbf E_{\alpha,X}[\mathcal E_\varepsilon]\to0$ by Vitali, and Tonelli gives $\mathbf E_\alpha[\mathbf E_x[\mathcal E_\varepsilon]]\to0$, whence $\mathbf E_x[\mathcal E_\varepsilon]\to0$ in $\mathbf P_\alpha$-probability). This will establish the theorem.
	
		Let $\tau_r:=\inf\{t>0:L_t>r\}$ be the right-continuous inverse of the local time.
	Since $L_t$ is continuous and nondecreasing, and it increases only when the process hits the boundary, its right-continuous inverse $\tau_r$ is strictly increasing and right-continuous, with jumps. 
	Hence, for $r<r'$, we have $\tau_r<\tau_{r'}$ whenever both are finite. We also note that \(\tau\) is independent of $\alpha$.
	Using the change of variable $r=L_s$,
	\[
	J_\varepsilon(t)=\int_0^{L_t}\alpha_{\tau_r/\varepsilon}\,dr,\qquad 
	\bar\alpha L_t = \int_0^{L_t}\bar\alpha\,dr.
	\]
	Define $I_\varepsilon(t):=\int_0^{L_t}\bigl(\alpha_{\tau_r/\varepsilon}-\bar\alpha\bigr)dr$,
	so that $|J_\varepsilon(t)-\bar\alpha L_t|=|I_\varepsilon(t)|$.
	
	For a fixed $t$, the random variable $I_\varepsilon(t)$ depends on $\alpha$ only.
	Because $\alpha$ is stationary and ergodic with finite state space,
	its autocovariance $C_\alpha(\theta)=\mathbf E[(\alpha_0-\bar\alpha)(\alpha_\theta-\bar\alpha)]$
	satisfies $C_\alpha(\theta)\to0$ as $\theta\to\infty$.
	Conditional on $\tau$ (i.e. on the realisation of $X$), we compute
	\[
	\mathbf E_\alpha\bigl[I_\varepsilon(t)^2\bigr]
	= \int_0^{L_t}\int_0^{L_t} C_\alpha\!\Bigl(\frac{|\tau_r-\tau_{r'}|}{\varepsilon}\Bigr)\,dr\,dr'.
	\]
	For almost every pair $(r,r')$ with $r\neq r'$, the strict increase of $\tau$ gives $|\tau_r-\tau_{r'}|>0$,
	hence $|\tau_r-\tau_{r'}|/\varepsilon\to\infty$ and $C_\alpha(\cdots)\to0$.
	The diagonal $\{r=r'\}$ has Lebesgue measure zero.
	Since $C_\alpha$ is bounded by $C_\alpha(0)=\operatorname{Var}(\alpha_0)$,
	the dominated convergence theorem yields $\mathbf E_\alpha[I_\varepsilon(t)^2]\to0$ for every fixed $t$,
	and therefore $I_\varepsilon(t)\to0$ in $\mathbf P_\alpha$-probability.
	Because this holds for every realisation of $\tau$ (i.e. for every $\omega_X$), integrating over $\tau$ gives $\mathbf E_{\alpha,X}[I_\varepsilon(t)^2]\to0$, and therefore $J_\varepsilon(t)\to\bar\alpha L_t$ in $\mathbf P_{\alpha,X}$-probability.
	
	For any partition $0=t_0<t_1<\dots<t_m=T$ and any $t\in[t_{k-1},t_k]$,
	the monotonicity implies
	\[
	J_\varepsilon(t_{k-1})-\bar\alpha L_{t_k}\le J_\varepsilon(t)-\bar\alpha L_t\le
	J_\varepsilon(t_k)-\bar\alpha L_{t_{k-1}}.
	\]
	By writing \(J_\varepsilon(t_{k-1})-\bar\alpha L_{t_k} = (J_\varepsilon(t_{k-1})-\bar\alpha L_{t_{k-1}}) - (\bar\alpha L_{t_{k}}-\bar\alpha L_{t_{k-1}}) \) and \(J_\varepsilon(t_{k})-\bar\alpha L_{t_{k-1}} = (J_\varepsilon(t_{k-1})-\bar\alpha L_{t_{k}}) + (\bar\alpha L_{t_{k}}-\bar\alpha L_{t_{k-1}}) \), we get
	\[
	|J_\varepsilon(t)-\bar\alpha L_t|\le \max_{i=k-1,k}|J_\varepsilon(t_i)-\bar\alpha L_{t_i}|
	+\bar\alpha(L_{t_k}-L_{t_{k-1}}).
	\]
	Taking the supremum over $t$ gives
	\[
	\mathcal E_\varepsilon \le \max_{k=0,\dots,m}|J_\varepsilon(t_k)-\bar\alpha L_{t_k}|
	+ \bar\alpha\max_{k=1,\dots,m}(L_{t_k}-L_{t_{k-1}}).
	\]
	
	Now $L_t$ is almost surely continuous on $[0,T]$.
	For any $\delta>0$ and $\eta>0$, choose a partition fine enough so that
	\[
	\mathbf P_X\Bigl(\bar\alpha\max_k(L_{t_k}-L_{t_{k-1}})>\frac{\delta}{2}\Bigr)<\frac{\eta}{2}.
	\]
	For this fixed partition, the finite set $\{t_k\}$ has the property that
	\[
	\max_k|J_\varepsilon(t_k)-\bar\alpha L_{t_k}|\to0\quad\text{in }\mathbf P_{\alpha,X}\text{-probability},
	\]
	because each term converges and the maximum of finitely many convergent sequences converges.
	Thus, for $\varepsilon$ sufficiently small,
	\[
	\mathbf P_{\alpha,X}\Bigl(\max_k|J_\varepsilon(t_k)-\bar\alpha L_{t_k}|>\frac{\delta}{2}\Bigr)<\frac{\eta}{2}.
	\]
	Combining the two estimates,
	\[
	\mathbf P_{\alpha,X}\bigl(\mathcal E_\varepsilon>\delta\bigr)<\eta
	\]
	for all small $\varepsilon$. Hence $\mathcal E_\varepsilon\to0$ in $\mathbf P_{\alpha,X}$-probability.
	
	Recall that $\mathcal E_\varepsilon \le (\|S\|_\infty+\bar\alpha)L_T$. For reflected Brownian motion in a bounded $C^2$ domain, $\mathbf E[L_T]<\infty$ (since it is finite for every finite time \(T\)). Hence the family $\{\mathcal E_\varepsilon\}$ is uniformly integrable. Together with $\mathcal E_\varepsilon\to0$ in $\mathbf P_{\alpha,X}$-probability, this implies $\mathbf E_{\alpha,X}[\mathcal E_\varepsilon]\to0$ (Vitali's convergence theorem).
	By Tonelli's theorem,
	$\mathbf E_{\alpha,X}[\mathcal E_\varepsilon] = \mathbf E_\alpha\bigl[\mathbf E_x[\mathcal E_\varepsilon]\bigr]$.
	The non‑negative random variable $\mathbf E_x[\mathcal E_\varepsilon]$ (which depends only on $\alpha$)
	has expectation tending to zero; therefore it converges to zero in $\mathbf P_\alpha$-probability.
	Consequently,
	\[
	\sup_{t\in[0,T]}|u^\varepsilon(t,x)-u^0(t,x)|\le \|f\|_\infty\,\mathbf E_x[\mathcal E_\varepsilon]
	\longrightarrow 0\quad\text{in }\mathbf P_\alpha\text{-probability},
	\]
	which completes the proof.
\end{proof}
	Note that, consistently with the notation introduced in Theorem \ref{thm:quenched}, the function $u^\varepsilon(t,x)$ corresponds to the action of the evolution family $(S_{0,t}^{\alpha^\varepsilon})_{t \ge 0}$ on the initial datum $f$, i.e., $u^\varepsilon(t,x) = (S_{0,t}^{\alpha^\varepsilon} f)(x)$.
	
	\begin{remark}
		We use the term ``fast switching'' to describe the asymptotic regime in which the random boundary parameter evolves on a time scale that is much faster than the natural time scale of the reflected diffusion. Concretely, one rescales only the environment (the process \(\alpha\)) so that its fluctuations become rapid compared with the motion of \(X\); the particle path \(X\) is observed at its original physical time scale. 
		
		This choice reflects a clear modelling assumption: the medium (or the collection of microscopic gates on the boundary) changes state very frequently, while the diffusing particle experiences these rapid changes as an averaged, effective boundary reactivity. Under stationarity and ergodicity of the fast process, this intuitive picture is made rigorous by averaging arguments that replace the pathwise time-dependent reactivity by its mean in the limit of infinitely fast switching.
	\end{remark}

\section{Application to Gated Boundary Reactions}

We illustrate the practical relevance of our main results by considering a model for the binding of diffusive ligands to receptors located on a cell membrane surface. This is a fundamental problem in cellular signaling and biophysics (see \cite{BergPurcell1977,Zwanzig1990}).

In realistic biological environments, the receptors on the boundary are not permanently active. Their state fluctuates stochastically due to thermal conformational changes, a phenomenon known as \textit{gating}. We model the specific reactivity of the boundary at time $t$ by the process $\{\alpha_t\}_{t\ge 0}$.

We assume the receptors switch between two possible states: a \textit{closed state} (inert) and an \textit{open state} (reactive).
\begin{itemize}
	\item \textit{Closed State ($s=0$):} The receptor is sterically inaccessible. The boundary acts as a purely reflecting barrier. In this state, we have $\alpha_t = 0$.
	\item \textit{Open State ($s=\kappa$):} The receptor is exposed and can bind the ligand. The boundary is partially absorbing with an intrinsic reaction rate $\kappa > 0$. In this state, we have $\alpha_t = \kappa$.
\end{itemize}

Mathematically, the process $\{\alpha_t\}_{t\ge 0}$ is a continuous-time Markov chain with state space $S=\{0, \kappa\}$. The transitions are governed by the switching rates $\lambda_{\text{on}}$ (from closed to open) and $\lambda_{\text{off}}$ (from open to closed). The infinitesimal generator $Q$ of the chain is given by:
\begin{equation}\label{eq:generator_Q}
	Q = \begin{pmatrix} 
		-\lambda_{\text{on}} & \lambda_{\text{on}} \\ 
		\lambda_{\text{off}} & -\lambda_{\text{off}} 
	\end{pmatrix}.
\end{equation}
As in \cite{Szabo1982}, the concentration of ligands, denoted by $u(t,x)$, evolves according to the diffusion equation. In this paper, the interaction with gating receptors is modeled through a stochastic Robin boundary condition for the heat equation, where the boundary reactivity undergoes Markovian switching in time:
\begin{equation}\label{eq:gated_system}
	\begin{cases}
		\partial_t u(t,x) = \tfrac{1}{2}\Delta u(t,x), & x \in D, \, t > 0, \\
		\partial_n u(t,x) + \alpha_t u(t,x) = 0, & x \in \partial D, \, t > 0,
	\end{cases}
\end{equation}
where we assumed a unitary diffusion coefficient for simplicity (or absorbed it into the time scale). The term $\alpha_t u$ represents the time-dependent flux of reaction at the membrane.

\subsection*{The Fast Switching Limit}
In many biological regimes, the conformational changes of the receptors occur on the nanosecond to microsecond scale, whereas the typical diffusion time of the ligand across the cell is on the order of milliseconds or seconds. This separation of time scales implies that the switching of $\alpha_t$ is extremely fast compared to the motion of the Brownian particle.

Let $\varepsilon > 0$ represent the ratio between the characteristic time of the gating process and the diffusion time. We consider the rescaled process $\alpha^\varepsilon_t = \alpha_{t/\varepsilon}$. Since the Markov chain on $S=\{0, \kappa\}$ is irreducible and finite, it possesses a unique invariant measure $\pi = (\pi_0, \pi_\kappa)$, and the process is exponentially mixing. Theorem \ref{thm:averaging} (applied to the stationary process $\alpha_t$) guarantees that, in the limit $\varepsilon \to 0$, the effective boundary condition is a Robin condition with constant coefficient $\bar\alpha$ given by the stationary mean of $\alpha_t$; moreover, due to exponential mixing, the same limit holds for any initial distribution of the gating states.

To compute the effective parameter $\bar{\alpha}$, we must determine the stationary distribution $\pi = (\pi_0, \pi_\kappa)$ of the Markov chain. The invariant measure satisfies the equation $\pi Q = 0$ subject to the normalization condition $\pi_0 + \pi_\kappa = 1$. Using the generator in \eqref{eq:generator_Q}, the system becomes:
\[
\begin{cases}
	-\lambda_{\text{on}} \pi_0 + \lambda_{\text{off}} \pi_\kappa = 0, \\
	\pi_0 + \pi_\kappa = 1.
\end{cases}
\]
From the first equation, we obtain $\pi_\kappa = \frac{\lambda_{\text{on}}}{\lambda_{\text{off}}} \pi_0$. Substituting this into the normalization condition:
\[
\pi_0 \left( 1 + \frac{\lambda_{\text{on}}}{\lambda_{\text{off}}} \right) = 1 
\implies \pi_0 = \frac{\lambda_{\text{off}}}{\lambda_{\text{on}} + \lambda_{\text{off}}}.
\]
Consequently, the stationary probability of being in the open (reactive) state is:
\[
\pi_\kappa = 1 - \pi_0 = \frac{\lambda_{\text{on}}}{\lambda_{\text{on}} + \lambda_{\text{off}}}.
\]
The effective reactivity $\bar{\alpha}$ is the average of the process with respect to this stationary measure:
\[
\bar{\alpha} = \mathbf{E}[\alpha_\infty] = 0 \cdot \pi_0 + \kappa \cdot \pi_\kappa = \kappa \frac{\lambda_{\text{on}}}{\lambda_{\text{on}} + \lambda_{\text{off}}}.
\]
Therefore, in the limit of fast gating, the effective boundary condition is given by:
\begin{equation}
	\partial_n u + \bar{\alpha} u = 0.
\end{equation}
This result provides a rigorous mathematical justification for the standard biochemical approximation \cite{Zwanzig1990}, which replaces the rapidly fluctuating receptors with a uniformly permeable membrane having an effective rate constant $\bar{\alpha}$.

	\section*{Acknowledgements}
	The author thanks to the group INdAM-GNAMPA for the support under their Grants.\\
	%The research has been mostly funded by MUR under the project PRIN 2022 - 2022XZSAFN - CUP B53D23009540006 - PNRR M4.C2.1.1.: Anomalous Phenomena on Regular and Irregular Domains:
	%Approximating Complexity for the Applied Sciences. \\
	%Web Site: \url{https://www.sbai.uniroma1.it/~mirko.dovidio/prinSite/index.html}.


\begin{thebibliography}{99}
		
		\bibitem{Arendt2018}
		W.~Arendt, S.~Kunkel and M.~Kunze, ``Diffusion with nonlocal Robin boundary conditions,'' \textit{J. Math. Soc. Japan}, vol.~70, no.~4, pp.~1523--1556, 2018.
		
		\bibitem{ArendtMonniaux2016}
		W.~Arendt and S.~Monniaux, ``Maximal regularity for non-autonomous Robin boundary conditions,'' \textit{Math. Nachr.}, vol.~289, no.~11-12, pp.~1325--1340, 2016.
		
		\bibitem{Berezhkovskii2004}
		A.~M.~Berezhkovskii, Y.~A.~Makhnovskii, M.~I.~Monine, V.~Y.~Zitserman and S.~Y.~Shvartsman, ``Boundary homogenization for trapping by patchy surfaces,'' \textit{J. Chem. Phys.}, vol.~121, no.~22, pp.~11390--11397, 2004.
		
		\bibitem{Berezhkovskii2006}
		A.~M.~Berezhkovskii, M.~I.~Monine, C.~B.~Muratov and S.~Y.~Shvartsman, ``Homogenization of boundary conditions for surfaces with regular arrays of traps,'' \textit{J. Chem. Phys.}, vol.~124, 044508, 2006.
		
		\bibitem{BergPurcell1977}
		H.~C.~Berg and E.~M.~Purcell, ``Physics of chemoreception,'' \textit{Biophys. J.}, vol.~20, pp.~193--219, 1977.
		
%		\bibitem{BlumenthalGetoor2007}
%		R.~M.~Blumenthal and R.~K.~Getoor, \textit{Markov Processes and Potential Theory}, Courier Corporation, 2007.
		
		\bibitem{Bressloff2015}
		P.~C.~Bressloff and S.~D.~Lawley, ``Stochastically gated diffusion-limited reactions for a small target in a bounded domain,'' \textit{Phys. Rev. E}, vol.~92, no.~6, p.~062117, 2015.
		
		\bibitem{BressloffLawley2015}
		P.~C.~Bressloff and S.~D.~Lawley, ``Escape from a potential well with a randomly switching boundary,'' \textit{J. Phys. A: Math. Theor.}, vol.~48, 225001, 2015.
		
%		\bibitem{Cerrai2009}
%		S.~Cerrai, ``A Khasminskii type averaging principle for stochastic reaction-diffusion equations,'' \textit{Ann. Appl. Probab.}, vol.~19, no.~3, pp.~899--948, 2009.
		
		\bibitem{ColantoniDovidio2025}
		F.~Colantoni and M.~D'Ovidio, ``Elastic Brownian motion with random jumps from the boundary,'' \textit{arXiv preprint arXiv:2511.01455}, 2025.
		
		\bibitem{ColantoniDOvidioPagnini2025}
		F.~Colantoni, M.~D'Ovidio, and G.~Pagnini, ``Time reversal of Reflected Brownian Motion with Poissonian Resetting,'' \textit{J. Stat. Phys.}, vol.~192, p.~147, 2025.
		
		\bibitem{DovidioFCAA}
		M.~D'Ovidio, ``Fractional boundary value problems and elastic sticky Brownian motions,'' \textit{Fract. Calc. Appl. Anal.}, vol.~27, no.~5, pp.~2162--2202, 2024.
		
		\bibitem{Dovidio2024}
		M.~D'Ovidio, ``Fractional Boundary Value Problems and elastic sticky Brownian motions, II: The bounded domain,'' \textit{arXiv preprint arXiv:2205.04162}, 2024.
		
		\bibitem{FiloLuckhaus1995}
		J.~Filo and S.~Luckhaus, ``Asymptotic expansion for a periodic boundary condition,'' \textit{J. Differential Equations}, vol.~120, pp.~133--179, 1995.
		
		\bibitem{FiloLuckhaus2000}
		J.~Filo and S.~Luckhaus, ``Homogenization of a boundary condition for the heat equation,'' \textit{J. Eur. Math. Soc.}, vol.~2, pp.~217--258, 2000.
		
		\bibitem{Friedman1995}
		A.~Friedman, C.~Huang and J.~Yong, ``Effective permeability of the boundary of a domain,'' \textit{Comm. Partial Differential Equations}, vol.~20, pp.~1235--1257, 1995.
		
		\bibitem{Grebenkov2019}
		D.~S.~Grebenkov, ``Probability distribution of the boundary local time of reflected Brownian motion in Euclidean domains,'' \textit{Phys. Rev. E}, vol.~100, 062110, 2019.
		
		\bibitem{Grebenkov2024review}
		D.~S.~Grebenkov, ``Encounter-based approach to target search problems: a review,'' in \textit{Target Search Problems}, Springer, pp. 77-105, 2024.
		
		\bibitem{ItoMcKean1974}
		K.~It\^{o} and H.~P.~McKean, Jr., \textit{Diffusion Processes and Their Sample Paths}, Springer, 1974.
		
%		\bibitem{Khasminskii1968}
%		R.~Z.~Khasminskii, ``On the principle of averaging the It\^{o} stochastic differential equations,'' \textit{Kybernetika}, vol.~4, pp.~260--279, 1968.
		
		\bibitem{Lawley2016}
		S.~D.~Lawley, ``Boundary value problems for statistics of diffusion in a randomly switching environment: PDE and SDE perspectives,'' \textit{SIAM J. Appl. Dyn. Syst.}, vol.~15, no.~3, pp.~1410--1433, 2016.
		
		\bibitem{LawleyKeener2015}
		S.~D.~Lawley and J.~P.~Keener, ``A new derivation of Robin boundary conditions through homogenization of a stochastically switching boundary,'' \textit{SIAM J. Appl. Dyn. Syst.}, vol.~14, no.~4, pp.~1845--1867, 2015.
		
		\bibitem{LawleyMattinglyReed2015}
		S.~D.~Lawley, J.~C.~Mattingly and M.~C.~Reed, ``Stochastic switching in infinite dimensions with applications to random parabolic PDE,'' \textit{SIAM J. Math. Anal.}, vol.~47, no.~6, pp.~3035--3063, 2015.
		
		\bibitem{LionsSznitman1984}
		P.-L.~Lions and A.-S.~Sznitman, ``Stochastic differential equations with reflecting boundary conditions,'' \textit{Commun. Pure Appl. Math.}, vol.~37, pp.~511--537, 1984.
		
		\bibitem{Mao2000}
		X.~Mao, A.~Matasov and A.~B.~Piunovskiy, ``Stochastic differential delay equations with Markovian switching,'' \textit{Bernoulli}, vol.~6, no.~1, pp.~73--90, 2000.
		
		\bibitem{Mao2006}
		X.~Mao and C.~Yuan, \textit{Stochastic Differential Equations with Markovian Switching}, Imperial College Press, 2006.
		
		\bibitem{Ocejo2020}
		A.~Ocejo, ``Integral equation characterization of the Feynman–Kac formula for a regime-switching diffusion,'' \textit{Results Appl. Math.}, vol.~5, p.~100087, 2020.
		
		\bibitem{Papanicolaou1990}
		V.~G.~Papanicolaou, ``The probabilistic solution of the third boundary value problem for second order elliptic equations,'' \textit{Probab. Theory Relat. Fields}, vol.~87, pp.~27--77, 1990..
		
		\bibitem{Singer2008}
		A.~Singer, Z.~Schuss, C.~Lumsden, A.~Osipov and D.~Holcman, ``Partially reflected diffusion,'' \textit{SIAM J. Appl. Math.}, vol.~68, no.~3, pp.~844--868, 2008.
		
		\bibitem{Szabo1982}
		A.~Szabo, D.~Shoup, S.~H.~Northrup, and J.~A.~McCammon, ``Stochastically gated diffusion-influenced reactions,'' \textit{J. Chem. Phys.}, vol.~77, no.~9, pp.~4484--4493, 1982.
		
		\bibitem{Taira2004}
		K.~Taira, \textit{Semigroups, Boundary Value Problems and Markov Processes}, Springer-Verlag, Berlin/Heidelberg, 2004.
		
		\bibitem{WeiWangNane2025}
		Z.~Wei, Y.~Wang, and E.~Nane, ``Feynman–Kac formula for regime-switching general diffusions,'' \textit{Appl. Math. Lett.}, vol.~168, p.~109573, 2025.
		
		\bibitem{ZhuYinBaran2015}
		C.~Zhu, G.~Yin, and N.~A.~Baran, ``Feynman–Kac formulas for regime-switching jump diffusions and their applications,'' \textit{Stochastics}, vol.~87, no.~6, pp.~1000--1032, 2015.
		
		\bibitem{Zwanzig1990}
		R.~Zwanzig, ``Rate processes with dynamical disorder,'' \textit{Acc. Chem. Res.}, vol.~23, no.~5, pp.~148--152, 1990.
		
	\end{thebibliography}
\end{document}